\begin{document}

\newtheorem{theorem}{Theorem}
\newtheorem{problem}{Problem}
\newtheorem{proposition}[theorem]{Proposition}
\newtheorem{question}{Question}
\theoremstyle{definition}
\newtheorem{remark}[theorem]{Remark}
\newtheorem{lemma}[theorem]{Lemma}
\def\st{\colon\,}
\def\PE#1#2#3{\prod_{#1=#2}^{#3}}
\def\ZZ{{\mathbb Z}}
\def\mod{\,{\rm mod}\,}
\def\C#1{\left|{#1}\right|}
\def\VEC#1#2#3{#1_{#2},\ldots,#1_{#3}}
\def\NN{{\mathbb N}}
\def\FR#1#2{\frac{#1}{#2}}
\def\FL#1{\lfloor{#1}\rfloor}
\def\e{{\rm e}}

\title{Cycles in Color-Critical Graphs}

\author{ 
Benjamin R. Moore\thanks{Computer Science Institute, Charles University,
Prague, Czech Republic: brmoore@iuuk.mff.cuni.cz.
Research supported by the Natural Sciences and Engineering Research Council of Canada.}\,,
Douglas B. West\thanks{Departments of Mathematics,
Zhejiang Normal University, Jinhua, China, and University of Illinois,
Urbana, IL: west@math.uiuc.edu.
Research supported by National Natural Science
Foundation of China grants NNSFC 11871439 and 11971439.
}}
\date{Revised November, 2021}
\maketitle

\baselineskip 16pt

\vspace{-2pc}
\begin{abstract}
Tuza [1992] proved that a graph with no cycles of length congruent to $1$
modulo $k$ is $k$-colorable.  We prove that if a graph $G$ has an edge $e$ such
that $G-e$ is $k$-colorable and $G$ is not, then for $2\le r\le k$, the edge
$e$ lies in at least $\PE i1{r-1}(k-i)$ cycles of length $1\mod r$ in $G$, and
$G-e$ contains at least $\frac12{\PE i1{r-1}(k-i)}$ cycles of length $0 \mod r$.

A $(k,d)$-coloring of $G$ is a homomorphism from $G$ to the graph $K_{k:d}$
with vertex set $\ZZ_{k}$ defined by making $i$ and $j$ adjacent if 
$d\le j-i \le k-d$.  When $k$ and $d$ are relatively prime, define $s$ by
$sd\equiv 1\mod k$.  A result of Zhu [2002] implies that $G$ is
$(k,d)$-colorable when $G$ has no cycle $C$ with length congruent to $is$
modulo $k$ for any $i\in \{1,\ldots,2d-1\}$.  In fact, only $d$ classes need be
excluded: we prove that if $G-e$ is $(k,d)$-colorable and $G$ is not, then $e$
lies in at least one cycle with length congruent to $is\mod k$ for some $i$ in
$\{1,\ldots,d\}$.  Furthermore, if this does not occur with
$i\in\{1,\ldots,d-1\}$, then $e$ lies in at least two cycles with length
$1\mod k$ and $G-e$ contains a cycle of length $0 \mod k$.
\end{abstract}

\section{Introduction}

One of the most fundamental results in graph theory is that graphs without odd
cycles are $2$-colorable.  This has been generalized in many ways.
Stong~\cite{Sto} proved that a graph is $k$-colorable if every vertex lies
in fewer than $\binom k2$ odd cycles.  Erd\H{o}s and Hajnal~\cite{EH} proved
that a graph having no odd cycle of length more than $2j-1$ is $2j$-colorable.
Tuza~\cite{Tuz} proved that a graph having no cycle with length congruent to
$1$ modulo $k$ is $k$-colorable.  Tuza also strengthened Minty's
Theorem~\cite{Min} that a graph $G$ is $k$-colorable if it has an orientation
in which no cycle of $G$ has more than $k-1$ times as many forward edges as
backward edges; Tuza showed that only cycles of length $1\mod k$ need be
considered in that computation.

More recent work has provided guarantees not only for existence of cycles
with certain lengths, but also lower bounds on the number of such cycles.
A graph is {\it $(k+1)$-critical} if it is not $k$-colorable but every proper
subgraph is $k$-colorable.  It is {\it doubly critical} if deleting the
endpoints of any edge reduces the chromatic number by $2$.  In studying a
weaker form of the Erd\H{o}s--Lov\'asz Tihany Conjecture that all doubly
critical graphs are complete, Kawarabayashi, Pedersen, and Toft~\cite{KPT}
showed that in a doubly critical graph with chromatic number $k+1$, every edge
lies in $\PE i1{l-2}(k-i)$ cycles of length $l$.  They used the technique of 
{\it generalized Kempe chains}, introduced as early as Neumann-Lara~\cite{Neu}
and named in Toft~\cite{Toft}.  Such a chain is a path following a particular
list of colors.

Without the doubly-critical requirement, in this paper we obtain cycles in
congruence classes rather than with specified lengths.  We illustrate the
generalized Kempe chain technique by first strengthening Tuza's basic result,
showing that for $2\le r\le k$ every edge in a $(k+1)$-critical graph lies in
at least $\PE i1{r-1}(k-i)$ cycles of length congruent to $1$ modulo $r$.
The statement is more general.
\begin{theorem}
For $2\le r\le k$ and $e\in E(G)$, if $G-e$ is $k$-colorable and $G$ is not,
then $e$ lies in at least $\PE i1{r-1}(k-i)$ cycles of length congruent to $1$
modulo $r$.  
\end{theorem}
Thus every graph with fewer than $(k-1)!$ cycles of length congruent to $1$
modulo $k$ is $k$-colorable.

\bigskip
We also apply our technique to the more general problem of $H$-coloring.
A {\it homomorphism} from a graph $G$ to a graph $H$ is a map
$\phi\st V(G)\to V(H)$ such that $uv\in E(G)$ implies $\phi(u)\phi(v)\in E(H)$.
A homomorphism into $H$ is also called an {\it $H$-coloring}.
A proper $k$-coloring is simply a $K_k$-coloring, where $K_k$ is the complete
graph with $k$ vertices.
The $H$-coloring problem becomes more complicated when $H$ is not complete
because there are more ways for a coloring to violate an edge.  An ordinary
proper coloring requires colors on adjacent vertices to be distinct, but this
is no longer enough.

The {\it circular clique} $K_{k:d}$ has vertex set $\ZZ_k$ and edge set
$\{ij\st d\le j-i\le k-d\}$.  The complete graph $K_k$ is simply the circular
clique $K_{k:1}$.  Homomorphism into $K_{k:d}$ is called {\it $(k,d)$-coloring},
and a graph having a $(k,d)$-coloring is {\it $(k,d)$-colorable}.  When $G$ has
an edge, $(k,d)$-coloring requires $k\ge2d$.
The {\it circular chromatic number} $\chi_c$ of a graph $G$ is the least $k/d$
such that $G$ is $(k,d)$-colorable.  In particular, if $k'/d'\le k/d$, then
$K_{k':d'}$ is $K_{k:d}$-colorable.  See Zhu~\cite{Zhu1,Zhu2} for surveys on
this topic.

Zhu~\cite{Zhu3} extended Tuza's result to circular coloring.  Given an 
orientation of a graph $G$, and given a cycle $C$ in $G$ viewed in a consistent
direction, let $C^-$ denote the set of edges in $C$ oriented oppositely to
their orientation of $G$.  Zhu proved that if $G$ has an orientation such that
$\C{E(C)}/|C^-|\le k/d$ for every cycle $C$ (in each direction) such that
$d\C{E(C)}$ is congruent modulo $k$ to some value in $\{1,\ldots,2d-1\}$, then
$G$ is $(k,d)$-colorable.

When $d$ and $k$ are relatively prime, let $s$ be the congruence class such
that $sd\equiv 1\mod k$.  It follows from Zhu's result that if $G$ has no cycle
with length congruent to $is$ modulo $k$ for any $i$ with $1\le i\le 2d-1$,
then $G$ is $(k,d)$-colorable.  That is, $(k,d)$-colorability holds when $2d-1$
congruence classes of cycle lengths modulo $k$ are forbidden.  Our result
implies that it suffices to exclude $i\in\{1,\ldots,d\}$.  In particular, if
$G-e$ is $K_{k:d}$-colorable and $G$ is not, then $e$ lies in a cycle with
length congruent to $is\mod k$ for some $i$ in $\{1,\ldots,d\}$.  Note that
$ds\equiv 1\mod k$.  We further show that if $G$ has no cycle through $e$ with
length $is\mod k$ when $i\in\{1,\ldots,d-1\}$, then $e$ lies in at least two
cycles with length $1\mod k$ and $G-e$ contains a cycle of length $0 \mod k$.
(Note that $(k,d)$-coloring forbids loops.)

The special case of $K_{(2d+1):d}$-coloring shows that our result is sharp.
Here $-2d\equiv 1\mod(2d+1)$, so $s=-2$, and we seek a cycle length congruent
to $-2i$ for some $i\in\{1,\ldots,k\}$.  These lengths are the odd values from
$2d-1$ to $1$.  The graph $K_{(2d+1):d}$ is isomorphic to the odd cycle
$C_{2d+1}$.  Thus $\chi_c(G)\le 2+1/d$ for a $C_{2d+1}$-colorable graph
$G$.  All shorter odd cycles are critical non-$C_{2d+1}$-colorable graphs (we
can also view a single vertex with a loop as a degenerate such example).  This
means that although possibly only one cycle length among the listed classes of
lengths occurs, we cannot omit any of those classes from the list.

\bigskip
Returning to the topic of proper $k$-coloring, there are further questions
to ask.  An edge in the complete graph $K_{k+1}$ lies in exactly $(k-1)!$
cycles of length $1\mod k$.  When $r$ is at most $k/2$, the guarantee
of $\PE i1{r-1}(k-i)$ cycles of length $1\mod r$ is not sharp in $K_{k+1}$,
because cycles of length $r+1$ and cycles of length $2r+1$ both count.

\begin{question}
Can the guarantee of $\PE i1{r-1}(k-i)$ cycles of length $1\mod r$ through
each edge be sharp for non-complete $(k+1)$-critical graphs
when $k/2<r<k$?
\end{question}

We can also consider other congruence classes.  In fact, $K_{k+1}$ has cycles
in all congruence classes modulo $r$ for $2\le r\le k$ except for the class of
$2$ modulo $k$.  In a paper on cycle lengths in directed graphs, Chen, Ma, and
Zang \cite{Jiema1} proved that for integers $l$ and $k$ with $1\le l\le k$ and
$k\ge2$, a graph containing no cycle of length congruent to $l$ modulo $k$ is
$k$-colorable if $l\ne2$, and is $(k+1)$-colorable if $l=2$.  This proved a
strong form of a conjecture of Diwan, Kenkre, and Vishwanathan~\cite{DKV} and
answered a question asked by Tuza~\cite{Tuz}.

A recent paper of Gao, Huo, Lui, and Ma~\cite{Jiema2} resolves almost all the
existence questions in a very strong way.  Stated in our language, they proved
that a non-$k$-colorable graph contains $k-1$ cycles of consecutive lengths.
This covers all but one congruence class modulo $k$ and all congruence classes
for smaller moduli.  Non-$k$-colorable graphs have $(k+1)$-critical subgraphs,
which have minimum degree at least $k$; they proved also that $3$-connected
nonbipartite graphs with minimum degree at least $k$ have cycles with $k-1$
consecutive lengths.  Also, minimum degree at least $k$ guarantees cycles of
all even lengths modulo $k-1$, extending to all lengths modulo $k-1$ in the
$2$-connected nonbipartite case.  Finally, for $k\ge3$ every $k$-connected
graph has a cycle whose length is a multiple of $k$.  These results were
variously conjectured by Sudakov and Verstra\"ete, by Bondy and Vince, by
Thomassen, and by Dean.  In~\cite{Jiema2} they are proved by a unified
approach, but the argument is quite long.

A subsequent paper by Gao, Huo, and Ma~\cite{Jiema3} resolved the remaining
question about $2\mod k$, proving that when $k\ge6$ every non-$k$-colorable
graph not having $K_{k+1}$ as a block contains $k$ cycles of consecutive
lengths.  Hence it has cycles in all congruence classes with modulus at most
$k$.  The case $2\mod k$ when $k=3$ follows from a combination of results of
Saito~\cite{Sai} and Dean, Kaneko, Ota, and Toft~\cite{DKOT}.  For
$k\in\{4,5\}$, the authors of \cite{Jiema3} state that their method works but
yields a proof that was too long to include.  Their result culminates a long
series of conjectures and theorems on cycle lengths in color-critical graphs by
many researchers; see~\cite{Jiema3} for the history and further references.
%
%
%

One may also wonder whether the cycle length guarantees follow from weaker
hypotheses.  Since $(k+1)$-critical graphs are $k$-edge-connected, one may
wonder whether being $k$-chromatic and critically $k$-edge-connected is enough.
Already this fails when $r=k$ and we ask for just one cycle.  The Petersen
graph is $3$-colorable (but not $3$-critical) and is critically
$3$-edge-connected.  However, it has no $4$-cycle, no $7$-cycle, and no
$10$-cycle, so it has no cycle of length congruent to $1$ modulo $3$.

\section{Proper Coloring}

We consider the cycles forced when deletion of an edge reduces the chromatic
number.  

\begin{theorem}\label{kcol}
Fix $r,k\in\NN$ with $2\le r\le k$, and let $e$ be an edge in a graph $G$.  If
$G-e$ is $k$-colorable and $G$ is not, then $e$ belongs to at least
$\PE i1{r-1}(k-i)$ cycles in $G$ having lengths congruent to $1$ modulo $r$.
\end{theorem}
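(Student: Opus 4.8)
The plan is to fix one proper $k$-coloring $c$ of $G-xy$ and exploit the fact that $x$ and $y$ must receive the same color, say color $a_0$; otherwise $c$ would already properly color $G$. I would then produce the required cycles, one for each ordered sequence $(\VEC b1{r-1})$ of distinct colors chosen from the $k-1$ colors other than $a_0$. There are exactly $\PE i1{r-1}(k-i)$ such sequences, so it suffices to attach to each sequence a cycle through $xy$ of length $1\mod r$ in a way that is injective.

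Given a sequence, set $\gamma_0=a_0$ and $\gamma_j=b_j$ for $1\le j\le r-1$, and read the indices of $\gamma$ cyclically modulo $r$. Form the digraph $D$ on the vertices colored by $\VEC\gamma0{r-1}$, placing an arc from $u$ to $w$ whenever $uw\in E(G)$ and $c(w)$ is the successor of $c(u)$ in this cyclic order. Every arc advances the color index by $1$, so every directed walk from $x$ (which has index $0$) to $y$ (which also has index $0$) has length divisible by $r$; deleting directed cycles from such a walk leaves a directed path of length $0\mod r$, and adjoining $xy$ yields a cycle of length $1\mod r$ through $xy$. Thus the whole problem reduces to showing that $y$ is reachable from $x$ in $D$.

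The heart of the argument, and the step I expect to be the main obstacle, is this reachability claim, which I would prove by a Kempe-type recoloring. Suppose $y$ is not reachable from $x$, and let $R$ be the set of vertices reachable from $x$ in $D$. I recolor by shifting every vertex of $R$ one step forward in the cyclic order (a vertex of color $\gamma_j$ becomes $\gamma_{j+1}$), leaving all other vertices fixed. Edges inside $R$ stay properly colored because both endpoints shift equally; an edge leaving $R$ stays proper because if a neighbor $w\notin R$ of some $u\in R$ had the color that $u$ shifts into, then $D$ would contain the arc $u\to w$ and force $w\in R$, a contradiction. Hence the new coloring is proper on $G-xy$. But $x\in R$ moves from $a_0$ to $b_1$ while $y\notin R$ keeps color $a_0$, so now $x$ and $y$ differ and the coloring extends to a proper $k$-coloring of $G$, contradicting the hypothesis. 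This establishes reachability, hence the desired cycle, for every sequence.

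Finally I would check injectivity. For the cycle built from $(\VEC b1{r-1})$, the path $C-xy$ traversed from $x$ visits vertices whose colors are forced to be $a_0,b_1,b_2,\ldots$ in order, so the $i$th vertex after $x$ has color $b_i$ for $1\le i\le r-1$ (the path has length a positive multiple of $r$, hence at least $r$, so these vertices exist). Consequently the sequence can be read back from the cycle together with $c$, so distinct sequences yield distinct cycles. This produces at least $\PE i1{r-1}(k-i)$ distinct cycles through $xy$ of length $1\mod r$, as required.
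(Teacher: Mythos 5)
Your proposal is correct and follows essentially the same approach as the paper's proof: fixing one coloring of $G-xy$, building a color-shift digraph for each cyclic ordering of $r$ colors containing the common color of $x$ and $y$ (your ordered sequences $(\VEC b1{r-1})$ are exactly the paper's cyclic permutations of $r$-subsets containing $\phi(x)$), proving reachability of $y$ from $x$ by the same Kempe-type shift recoloring, and recovering the color sequence from the path to get injectivity. No gaps; your treatment of injectivity is in fact slightly more explicit than the paper's.
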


\begin{proof}
Let $[k]=\{1,\ldots,k\}$.  Fix a proper $k$-coloring $\phi$ of $G-e$ with
colors in $[k]$.  Let $x$ and $y$ be the endpoints of $e$.  We obtain a cycle
through $e$ for each cyclic list of $r$ members of $[k]$ containing $\phi(x)$.
Starting with $\phi(x)$, the cyclic list $\sigma$ can be formed in $\PE
i1{r-1}(k-i)$ ways.

Given such $\sigma$, define the {\it $\sigma$-subdigraph} of $G$ generated by
$\phi$ to be the digraph $D_\sigma$ with vertex set $V(G)$ such that $uv$ is an
edge in $D_\sigma$ if and only if $uv\in E(G)$ and $\sigma(\phi(u))=\phi(v)$.
Let $F$ be the subdigraph of $D_\sigma$ induced by all vertices reachable from
$x$ by paths in $D_\sigma$.

Define a recoloring $\phi'$ of $G-e$ by $\phi'(u)=\sigma(\phi(u))$ for
$u\in V(F)$ and $\phi'(u)=\phi(u)$ for $u\in V(G)-V(F)$.  An edge is improperly
colored by $\phi'$ only if the color of one endpoint remains fixed and the 
other changes into it, but then the oriented version of the edge lies in $F$
and both endpoints change color.  Thus $\phi'$ is a proper $k$-coloring of
$G-e$.

Since $G$ is not $k$-colorable, $\phi(x)=\phi(y)$.  Also $\phi'(x)\ne \phi(x)$.
If $y\notin V(F)$, then we have $\phi'(x)\ne\phi(x)=\phi(y)=\phi'(y)$, and
$\phi'$ is a proper coloring of $G$, which by hypothesis does not exist.  Hence
$y\in V(F)$, meaning that $y$ is reachable from $x$ via a path in $F$.

Since paths in $F$ follow colors according to $\sigma$, and $\phi(y)=\phi(x)$,
the length of any $x,y$-path in $F$ is a multiple of $r$, and the cycle in $G$
completed by adding the edge $yx$ has length congruent to $1$ modulo $r$.
Furthermore, since the coloring $\phi$ is fixed, the resulting $x,y$-paths in
$G$ are distinct for distinct choices of $\sigma$.
Hence we obtain $\PE i1{r-1}(k-i)$ cycles through $e$.
\end{proof}

Our result was motivated by a similar quantitative argument by Brewster,
McGuinness, Moore, and Noel~\cite{BMMN}.  We state it in our terminology to
generalize it.  For $k>2$, they showed that if $G$ is not $k$-colorable but
$G-xy$ is $k$-colorable, then $G-xy$ contains at least $(k-1)!/2$ cycles with
lengths divisible by $k$.  The early paper of Tuza~\cite{Tuz} notes that Toft
and Tuza had observed for $k>2$ that every non-$k$-colorable graph contains a
cycle whose length is divisible by $k$.

\begin{theorem}
For $3\le r\le k$, if a graph $G$ is not $k$-colorable but $G-e$ is
$k$-colorable, where $e\in E(G)$, then $G-e$ contains at least
$\FR12\PE i1{r-1}(k-i)$ cycles whose lengths are divisible by $r$, none of
which contain $e$.
\end{theorem}
\begin{proof}
Let $x$ and $y$ be the endpoints of $e$.  Again fix a proper $k$-coloring
$\phi$ of $G-e$ and a cyclic permutation $\sigma$ of a set of $r$ colors
containing $\phi(x)$.  Define the digraph $D_\sigma$ as above.  Note again that
$\phi(x)=\phi(y)$, since $G$ is not $k$-colorable.

If $D_\sigma$ is acyclic, then we recolor $G$ by again changing the color on
$v$ from $\phi(v)$ to $\sigma(\phi(v))$, but this time we perform the change
{\it one vertex at a time}, always changing the color at a sink of the
unchanged subgraph.  At each step we have a proper $k$-coloring of $G-e$.  If
at some point the color on $x$ or $y$ changes, then we have produced a proper
$k$-coloring of $G$.  Since $G$ has no such coloring, $D_\sigma$ must contain
a cycle.  Since $\phi(x)=\phi(y)$, the edge $e$ does not appear in $D_\sigma$,
so such cycles do not contain $e$.

The length of any cycle in $D_\sigma$ is a multiple of $r$.  However, a
cyclic permutation and its reverse will select the same cycle in $G$, because
the corresponding digraphs are obtained from each other by reversing all the
edges.  Hence we are in fact guaranteed $\FR12\PE i1{r-1}(k-i)$ cycles whose lengths are
multiples of $r$, and none of these cycles contain $e$.
\end{proof}

We have guaranteed $(k-1)!$ cycles of length $1\mod k$ in a $(k+1)$-critical
graph.  We next present a probabilistic argument that guarantees $k!/2$,
suggested by a referee.  Although $k!/2>(k-1)!$ when $k\ge3$, this argument
does not yield $k!/2$ such cycles {\it through every edge} in a
$(k+1)$-critical graph, so neither result implies the other.

\begin{theorem}
For $k\ge3$, a non-$k$-colorable graph has at least $k!/2$ cycles with lengths
congruent to $1$ modulo $k$, with equality for $k\ge4$ only when these cycles
all have length $k+1$.
\end{theorem}
\begin{proof}
Randomly order the vertices and orient each edge toward its later endpoint in
the order.  We will bound the probability that a given cycle of length
$1\mod k$ (or its reverse) has more than $k-1$ times as many forward edges as
backward edges.  If this probability is at most $2/k!$ and there are fewer than
$k!/2$ such cycles, then some orientation has no cycle of length $1\mod k$ with
more than $k-1$ times as many forward edges as backward edges.  By Tuza's
strengthening of Minty's Theorem, a graph with such an orientation is
$k$-colorable.

Let $C$ be a cycle of length $qk+1$.  Having more than $k-1$ times as many
forward edges as backward edges means that if following the vertices along $C$
involves at most $q$ backward steps and more than $(k-1)q$ forward steps in the
ordering.  Since the vertices outside the cycle are irrelevant, it suffices to
show that at most $(qk+1)!/k!$ of the $(qk+1)!$ orderings of $\VEC v1{qk+1}$
have at most $q$ instances of $v_i$ preceding $v_{i-1}$.

From one backward step to the next is an increasing run.  Hence to form an
ordering with at most $q$ backward steps we assign the {\it positions} of
vertices to bins $1$ through $q$ and place the positions within a bin in
increasing order.  This produces a list $\VEC\sigma1{qk+1}$ of the
positions $1$ through $qk+1$.

Now form the vertex permutation by putting vertex $v_i$ in position $\sigma_i$.
Vertices whose positions are in a single bin form a forward path in the
orientation, and backward steps only occur when starting a new bin.
Furthermore, every ordering of the vertices for which the resulting orientation
has at most $q$ backward steps arises in this way.  When the least position in
the next bin is higher than the last position in the current bin, there are
fewer backward steps, so we have included the vertex orderings where the cycle
has fewer backward steps.

Since we can start indexing the given cycle at any of its $qk+1$ vertices, a
given distribution of positions to $q$ bins produces bad orientations for the
cycle in $qk+1$ ways.  The probability that this cycle has too few back edges
in the random vertex ordering is thus bounded by $q^{qk+1}/(qk)!$.
We multiply by $2$ since the same cycle also arises in the opposite direction.

It thus suffices to show
\begin{equation}\label{ineq}
\frac{q^{qk+1}}{(qk)!} \le \frac1{k!}.
\end{equation}
Equality holds when $q=1$.  Strict inequality for other cases yields the
additional observation that if a non-$k$-colorable graph has only $k!/2$ cycles
of length congruent to $1$ modulo $k$, then those cycles all must have length
exactly $k+1$.  There are $k!/2$ cycles of length $k+1$ in $K_{k+1}$, so the
result is sharp.

The inequality \eqref{ineq} fails when $(q,k)=(2,3)$, where the value of
$q^{qk+1}/(qk)!$ is $8/45$, which exceeds $1/6$, but this is small enough.
The probability that a given $7$-cycle followed in order has too few back edges
is bounded by $8/45$, and for cycles of other lengths congruent to $1\mod 3$
the probability of having too few back edges will be bounded by $1/6$.  Since
we can follow a cycle in either direction, we change these bounds to $16/45$
and $1/3$.  Therefore, if a graph has only two cycles of length $1\mod 3$, the
expected number of bad cycles is bounded by $32/45$, so some vertex ordering
guarantees 3-colorability. 

It thus suffices to have \eqref{ineq} when $k\ge4$ and $q\ge2$ (except
$(q,k)=(2,3)$).  We give an approximate computation that is easy to make precise.
Stirling's Approximation is
$$
n!=\left(\frac{n}{\e}\right)^n\sqrt{2\pi n}\left[1+\frac1{12n}
+\frac1{288n^2}-\frac{139}{51840n^3}-O(n^{-4})\right].
$$
We keep only the first term, which provides a lower bound on $n!$

Rewriting the desired inequality, we seek
$q^{qk+1}\le (qk)!/k!$.  By Stirling's Approximation,
$$
\frac{(qk)!}{k!}
\approx \frac{(qk/\e)^{qk}}{(k/\e)^k}\frac{\sqrt{2\pi qk}}{\sqrt{2\pi k}}
=q^{qk+1/2}\left(\frac k\e\right)^{(q-1)k}.
$$
Thus the inequality we need is (roughly) $\sqrt q< (k/\e)^{(q-1)k}$.  The
right side increases rapidly with $k$.  When $k=3$, it equals $1.104^{3(q-1)}$.
Already when $q=3$ this is greater than $\sqrt q$, and similarly the inequality
holds for $(q,k)=(2,4)$.  To make the approximate argument precise, note that
in the numerator our approximation to $(qk)!$ is already less than $(qk)!$.  We
have the slack to use something slightly larger than $k!$ in the denominator,
which yields $q^{qk+1}<x<(qk)!/k!$ for some $x$.
\end{proof}

We believe that the characterization of equality also holds when $k=3$,
but there the argument above only restricts to lengths $4$ and $7$.

\section{Circular Coloring}

In this section we consider the analogous problem for $(k,d)$-coloring.
We will only use color cycles of the form $(0,d,2d,\ldots,-d)$ and their
reverse, so we get existence results rather than quantitative results.
Nevertheless, they are sharp in terms of the number of classes allowed,
as discussed in the introduction.

The proof may require many steps of recoloring to find a desired cycle.  This
is inherently necessary, because a $C_{2d+1}$-coloring of $C_{2d-1}-e$ may
alternate $0$ and $d$ along the path.

\begin{theorem}\label{circular}
Given $k$ and $d$ relatively prime with $k>2d$, let $s$ be the element of
$\ZZ_k$ such that $sd\equiv 1\mod k$.  For an edge $e$ in a graph $G$, if
$G-e$ is $K_{k:d}$-colorable and $G$ is not, then $e$ lies in a cycle in $G$ of
length congruent to $is\mod k$ for some $i$ in $\{1,\ldots,d\}$.
\end{theorem}

\begin{proof}
Fix a $K_{k:d}$-coloring $\phi$ of $G-e$.  Let $x$ and $y$ be the endpoints
of $e$.  By cyclic symmetry, we may assume $\phi(y)=0$.  Since $G$ is not
$K_{k:d}$-colorable, $\phi(x)\in\{0,\pm1,\ldots,\pm(d-1)\}$.
Let $\sigma$ be the cyclic permutation $(0,d,2d,\ldots,-d)$ of colors.
Define the digraph $D_\sigma$ as in Theorem~\ref{kcol}, and let $F$ be
the subdigraph of $D_\sigma$ induced by all vertices reachable from $x$ in
$D_\sigma$.

Given $\phi$, define $\phi'$ on $G-e$ by letting $\phi'(v)=\phi(v)+1$ for
$v\in V(F)$ and $\phi'(v)=\phi(v)$ for $v\notin V(F)$.  We claim that $\phi'$
is a $K_{k:d}$-coloring of $G-e$.  First, edges within $F$ or in $G-V(F)$
remain properly colored.  When $v\in V(F)$, the exploration of $D_\sigma$
extends along the edge $vw$ if $\phi(w)-\phi(v)=d$.  Since
$\phi(w)-\phi(v)\in\{d,d+1,\ldots,k-d\}$ for $vw\in E(G-e)$, having $v\in V(F)$
and $w\notin V(F)$ requires $\phi(w)-\phi(v)\in\{d+1,\ldots,k-d\}$.  Now
$\phi'(w)-\phi'(v)\in\{d,\ldots,k-d-1\}$, so such edges are also properly
colored in $\phi'$.

We will consider cases where $\phi(x)=j$, for $0\le j\le d-1$.
For the case $\phi(x)=-j$ with $1\le j\le d-1$, add $j$ to the color at each
vertex to obtain $\phi(x)=0$ and $\phi(y)=j$, and then interchange the roles of
$x$ and $y$ and apply the argument below.

When $\phi(x)=j$ and $\phi(y)=0$, we claim that $G$ has a cycle through $xy$
with length congruent to $is$ modulo $k$ for some $i$ in $\{1,\ldots,d-j\}$.
Note first that if $F$ has an $x,y$-path of length $r$, then 
$rd\equiv -j\mod k$, since each edge increases the color value by $d$.
Multiplying by $s$ yields $r\equiv -js\mod k$.  Since $sd\equiv1$,
adding $1$ to each side to compute the length of the cycle yields
$r+1\equiv -js+ds\equiv (d-j)s\mod k$.

We now prove the claim by induction on $d-j$.  
First consider $j=d-1$.  If $y\notin V(F)$, then $\phi'$ is a
$K_{k:d}$-coloring of $G$, since $\phi'(x)=d$ and $\phi'(y)=0$.  Hence 
$y\in V(F)$.  Now by the computation above we have a cycle through $yx$
with length congruent to $1s\mod k$.

Now suppose $j<d-1$.  If $y\in V(F)$, then the computation yields a cycle
through $yx$ with length congruent to $(d-j)s\mod k$.  Hence we may assume
$y\notin V(F)$.  Now $\phi'$ is a $K_{k:d}$-coloring of $G-xy$ with
$\phi'(x)=j+1$ and $\phi'(y)=0$.  The induction hypothesis, applied to
$\phi'$ with $\phi'(x)=j+1$, now implies that $G$ has a cycle through $xy$ with
length congruent to $is\mod k$ for some $i$ in $\{1,\ldots,d-j-1\}$.
Including $d-j$ in the set thus covers all cases to complete the induction step.
\end{proof}

Note that the proof of Theorem~\ref{circular} gives more detailed statements.
In particular, if the $K_{k:d}$-coloring of $G-e$ gives distinct colors to the
endpoints of $e$, then $G$ has a cycle through $e$ of length $is\mod{k}$ for
some $i$ in $\{1,\dots,d-1\}$.  The next result shows that if no such cycle
occurs, then we can find an extra cycle through $e$ of the remaining congruence
class, plus one avoiding $e$ with length divisible by $k$.

\begin{proposition}
If in the setting of Theorem~\ref{circular}, $e$ does not lie in a cycle
with length congruent to $is\mod k$ for some $i$ in $\{1,\ldots,d-1\}$, then
$e$ lies in at least two cycles of length $1\mod k$ and $G-e$ contains a
cycle of length $0 \mod k$.
\end{proposition}
\begin{proof}
With the possibilities $i\in\{1,\dots,d-1\}$ excluded in the argument of
Theorem~\ref{circular}, the remaining case is $j=0$ and $y\in V(F)$.
To reach $y$ from $x$ along steps of value $+d$, the number of steps must
be a multiple of $k$, since $\phi(x)=\phi(y)$, and adding $e$ completes a
cycle.  Under $\sigma^{-1}$, using the same $(k,d)$-coloring $\phi$ of $G-e$
and starting again from $x$ yields a second cycle of length $1\mod k$ through
$xy$.

Furthermore, if $G-e$ has no cycle of length $0\mod k$, then $D_\sigma$ is
acyclic.  Working backward from sinks, we can add $1$ to the color of each
reached vertex, \textit{one vertex at a time}, always maintaining a
$(k,d)$-coloring of $G-e$, until $x$ or $y$ changes color.  This reduces the
problem to the case $j>0$.  Since in this case $G$ has no cycle of length
$is$ with $i\in\{1,\ldots, d-1\}$, the previous arguments produce a
$(k,d)$-coloring of $G$, which by hypothesis does not exist.  Therefore, in
fact $G-e$ also contains a cycle of length $0 \mod k$. 
\end{proof}

In these arguments, we have not used cycles in $K_{k:d}$ other than that
generated by $d$ or $-d$.  When $k=2d+1$, these two are the only permutations
yielding cycles in the host graph, and that is why our sharpness examples in the
introduction are for $C_{2d+1}$-coloring.  In that case $s=-2$.  The set of
cycle lengths that cannot be avoided are the congruence classes $-2i\mod(2d+1)$
for $1\le i\le d$, and if there are no cycles through $e$ in the classes with
$1\le i\le d-1$, then we obtain two cycles with lengths $1\mod(2d+1)$.

Other cycles in the host graph can yield other sets of forced cycles, but
the key is designing a recoloring that produces another $H$-coloring of
$G-e$.


\begin{thebibliography}{99}
{\small
\renewcommand{\baselinestretch}{.92}            
\parskip=.5ex
\frenchspacing

\bibitem{BMMN}
R. C. Brewster, S. McGuinness, B. Moore, and J. A. Noel, 
A dichotomy theorem for circular colouring reconfiguration.
{\it Theoret. Comput. Sci.}  639 (2016), 1--13.

\bibitem{Jiema1}
Z. Chen, J. Ma, and W. Zang, Colouring digraphs with forbidden cycles.
{\it J. Combin. Theory Ser. B} 115 (2015), 210-223.

\bibitem{DKOT}
N. Dean, A. Kaneko, K. Ota and B. Toft, Cycles modulo 3.
{\it DIMACS Technical Report} 91-32 (1991).

\bibitem{DKV}
A. A. Diwan, S. Kenkre, and S. Vishwanathan,
Circumference, chromatic number and online coloring.
{\it Combinatorica} 33 (2013) 319--334.

\bibitem{EH}
P. Erd\H{o}s and A. Hajnal, On chromatic number of graphs and set-systems.
{\it Acta Math. Acad. Sci. Hungar.} 17 (1966), 61--99.

\bibitem{Jiema2}
J. Gao, Q. Huo, C. Liu, and J. Ma,
A unified proof of conjectures on cycle lengths in graphs.
{\it arXiv:1904.08126} 2019,
{\it Intl. Math. Res. Notices} (to appear).

\bibitem{Jiema3}
J. Gao, Q. Huo, and J. Ma,
A strengthening on odd cycles in graphs of given chromatic number.
{\it SIAM J. Discrete Math.} 35 (2021), 2317--2327.


\bibitem{KPT}
K. Kawarabayashi, A. Pedersen, and B. Toft,
Double-critical graphs and complete minors,
{\it Electron J. Combin.} 17 (2010), Paper \#R87.

\bibitem{Min}
G. J. Minty, A theorem on $n$-coloring the points of a linear graph.
{\it Amer. Math. Monthly} 69 (1962), no. 7, 623--624.

\bibitem{Neu}
V. Neumann-Lara, The dichromatic number of a digraph.
{\it J. Combin. Theory Ser.~B} 33 (1982), no. 3, 265--270.

\bibitem{Sai}
A. Saito, Cycles of length $2$ modulo $3$ in graphs.
{\it Discrete Mathematics} 101 (1992), no. 1--3, 285--289.

\bibitem{Sto}
R. Stong, Solution to problem 11086.
{\it Amer. Math. Monthly} 113 (2006), 372.
(Proposed in Volume 111 (2004), 440.)

\bibitem{Toft}
B. Toft, Colouring, stable sets and perfect graphs.
In {\it Handbook of Combinatorics, Vol. 1} (Elsevier, 1995), 233--288.

\bibitem{Tuz}
Zs. Tuza, Graph coloring in linear time.
{\it J. Combin. Theory Ser. B} 55 (1992), no. 2, 236--243.

\bibitem{Zhu1}
X. Zhu, Circular chromatic number: a survey.  In ``Combinatorics, graph theory,
algorithms and applications,''
{\it Discrete Math.} 229 (2001), no. 1-3, 371--410.

\bibitem{Zhu3}
X. Zhu, Circular colouring and orientation of graphs.
{\it J. Combin. Theory Ser. B} 86 (2002), no. 1, 109--113.

\bibitem{Zhu2}
X. Zhu, Recent developments in circular colouring of graphs.
In ``Topics in discrete mathematics,''
{\it Algorithms Combin.} 26, (Springer, 2006), 497--550. 

}



\end{thebibliography}
\end{document}